\newtheorem{theorem}{Theorem}[section]
\newtheorem{corollary}[theorem]{Corollary}
\newtheorem{lemma}[theorem]{Lemma}
\newtheorem{proposition}[theorem]{Proposition}
\newtheorem*{conjecture}{Conjecture}
\theoremstyle{definition}
\newtheorem*{remark}{Remark}
\newcommand{\ep}{\varepsilon}
\newcommand{\RR}{\mathbb{R}}
\newcommand{\NN}{\mathbb{N}}
\newcommand{\ZZ}{\mathbb{Z}}
\newcommand{\cE}{\mathcal{E}}
\newcommand{\cA}{\mathcal{A}}
\newcommand{\cU}{\mathcal{U}}
\newcommand{\cm}{\mathcal{M}}
\newcommand{\fm}{\mathfrak{M}}
\newcommand{\sE}{\mathscr{E}}
\newcommand{\sC}{\mathscr{C}}
\newcommand{\cx}{C(X)}
\newcommand{\su}{\mathscr{U}}
\newcommand{\spp}{\mathscr{P}}
\newcommand{\coo}{\cm_{\max}}
\newcommand{\sv}{\mathscr{V}}
\newcommand{\sh}{\mathscr{H}}
\newcommand{\al}{\alpha}
\newcommand{\btt}{\mathbb{T}}
\newcommand{\bttd}{\mathbb{T}^d}
\title[Equilibrium states of intermediate entropies]
      {Equilibrium states of intermediate entropies} 
\author[]{}
\subjclass[2010]{Primary: 37A35, 37D35.}
\keywords{equilibrium state, intermediate entropy, pressure,
Ma\~n\'e diffeomorphism, 
ergodic optimization, 
thermodynamical formalism}
 \email{sunpeng@cufe.edu.cn}
\begin{document}

\maketitle\ 

\bigskip

\centerline{\scshape Peng Sun}
\medskip
{\footnotesize
 \centerline{China Economics and Management Academy}
   \centerline{Central University of Finance and Economics}
   \centerline{Beijing 100081, China}
} 

\bigskip

\begin{abstract}

We explore an approach to the conjecture of Katok on intermediate
entropies that based on uniqueness of equilibrium states,
provided the entropy function is upper semi-continuous.
As an application, we prove Katok's conjecture for Ma\~n\'e diffeomorphisms.
\end{abstract}


\bigskip

\section{Introduction}

Let $(X,d)$ be a compact metric space and
$f:X\to X$ be a continuous map. 
Denote by $\cm(f)$ 
the subspace of all invariant measures for the dynamical system
$(X,f)$ and by $\cm_e(f)$ the subset of all ergodic measures.
Denote by $C(X)$ the space of all continuous potentials on $X$,
equipped with the $C^0$ (supremum) norm $\|\cdot\|$.
For 
$\phi\in C(X)$,
denote 
by $P(\phi)=P(f,\phi)$ the topological pressure of $(X,f,\phi)$
and by 
$P_\mu(\phi)=P_\mu(f,\phi):=h_\mu(f)+\int\phi d\mu$
the pressure
of $\mu\in\cm(f)$. 
We usually omit $f$ when it is clearly fixed.
The Variational Principle states that
\begin{equation*}
P(\phi)=\sup\{P_\mu(\phi):\mu\in\cm(f)\}.
\end{equation*}
For convenience, we say that $(X,f)$ is a \emph{USC} system if the entropy function
$\mu\mapsto h_\mu(f)$ is upper semi-continuous, where $h_\mu(f)$
denotes the metric entropy of $(X,f)$ with respect to $\mu\in\cm(f)$.
Denote the set of equilibrium states for $(X,f,\phi)$ by
$$\cE(\phi):=\{\mu\in\cm(f):P_\mu(\phi)=P(\phi)\}.$$
It is well-known that when $(X,f)$ is a USC system, 
$\cE(\phi)$ is nonempty for every $\phi\in C(X)$.
Study on equilibrium states has a long history. 
Existence, uniqueness and properties of equilibrium states are
important and popular topics in dynamical systems.
For example, see \cite{Bow75, Bow08, CT, DKU, IT, Ruelle}.

As indicated in \cite[Section 9.4]{Walters}, The topological
pressure $P(f,\cdot)$ for the system
$(X,f)$, as a functional on continuous potentials,
determines 
all invariant probability measures 
and their entropies. In this article, we would like to further
explore such connections and show its application to the 
conjecture of Katok on intermediate entropies.

Denote
$$\spp(f,\phi):=\{P_\mu(f, \phi):\mu\in\cm_e(f)\}$$
and
$$\sh(f):=\spp(f,0)=\{h_\mu(f):\mu\in\cm_e(f)\}.$$
We say that $(X,f)$ has the \emph{intermediate entropy property}
if $\sh(f)\supset[0,h(f))$, i.e. for every $a\in[0,h(f))$,
there is an ergodic measure $\mu\in\cm_e(f)$ 
such that
$h_\mu(f)=a$.

\begin{conjecture}[Katok]
Let $(X,f)$ be a $C^2$ diffeomorphism on a compact Riemannian manifold.
Then 
$(X,f)$ has 
the intermediate entropy property.

\end{conjecture}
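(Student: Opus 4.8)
The plan is to attack the full conjecture through Katok's theory of hyperbolic approximation, reducing the realization of a prescribed intermediate entropy value to the construction of a single horseshoe of sufficiently large topological entropy. Fix $a\in[0,h(f))$. By the variational principle together with the ergodic decomposition, the supremum of $\{h_\mu(f):\mu\in\cm_e(f)\}$ equals $h(f)>a$, so I may choose an ergodic measure $\mu\in\cm_e(f)$ with $h_\mu(f)>a$. Since $f$ is $C^2$, the Ruelle inequality applied to $f$ and to $f^{-1}$ shows that such a positive-entropy $\mu$ carries at least one positive and at least one negative Lyapunov exponent. The crucial further requirement is that $\mu$ be \emph{hyperbolic}, i.e. that all of its exponents be bounded away from zero; granting this, Katok's horseshoe theorem produces, for any $\ep>0$, a compact locally maximal hyperbolic set (horseshoe) $\Lambda\subset X$ with $h(f|_\Lambda)>h_\mu(f)-\ep$, and in particular with $h(f|_\Lambda)>a$ once $\ep$ is small.

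Next I would exploit the symbolic structure of the horseshoe. The restriction $f|_\Lambda$ is conjugate to a subshift of finite type, which is expansive and hence a USC system, and on such a system the set of entropies of ergodic measures is exactly the full interval $[0,h(f|_\Lambda)]$. Every intermediate value is attained either by passing to transitive subshifts of finite type of prescribed entropy, or by applying the uniqueness-of-equilibrium-states approach developed in this article to $f|_\Lambda$ and the family of potentials $t\phi$ with $\phi$ a suitable H\"older function, along which the entropy of the (unique, hence ergodic) equilibrium state varies continuously and sweeps out $[0,h(f|_\Lambda)]$. Either way, for our $a\le h(f|_\Lambda)$ there is an ergodic measure $\nu$ supported on $\Lambda$, and thus on $X$, with $h_\nu(f)=a$. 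As $a\in[0,h(f))$ was arbitrary, this would establish $\sh(f)\supset[0,h(f))$, the intermediate entropy property.

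The hard part, and indeed the reason the conjecture remains open in full generality, is the hyperbolicity requirement in the first step. Positive entropy forces one positive and one negative exponent, but in dimension at least three an ergodic measure of near-maximal entropy may possess a vanishing central exponent, and then Katok's horseshoe theorem does not apply; there is at present no general mechanism guaranteeing that ergodic measures with entropy close to $h(f)$ can be chosen with every exponent bounded away from zero. This is precisely where the argument breaks for $a$ near $h(f)$, since then $\mu$ is forced to have large entropy. In the surface case the obstruction evaporates, because a positive-entropy ergodic measure on a surface is automatically hyperbolic, and the scheme above recovers Katok's original theorem. The realistic route to the general statement is therefore to combine this horseshoe argument with structural hypotheses that either rule out neutral directions or restore upper semi-continuity together with uniqueness of equilibrium states, as in the Ma\~n\'e derived-from-Anosov setting treated in this paper, so that the missing hyperbolicity is supplied by the geometry of $f$ rather than assumed. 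Removing this hypothesis entirely is what I expect to be the principal obstacle, and a complete proof of the conjecture as stated must confront it directly.
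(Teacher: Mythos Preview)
The statement you are addressing is the \emph{conjecture} itself, and the paper does not prove it: it is stated as an open problem, and the paper's contribution is a new approach (via uniqueness of equilibrium states and continuity of the entropy of equilibrium states along one-parameter families of potentials) that settles the special case of Ma\~n\'e diffeomorphisms. There is therefore no ``paper's own proof'' to compare against.

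Your proposal is not a proof either, and you say so yourself. The outline you give --- pick an ergodic $\mu$ of entropy exceeding $a$, invoke Katok's horseshoe theorem to produce a hyperbolic set $\Lambda$ with $h(f|_\Lambda)>a$, then realize $a$ as the entropy of an ergodic measure on the subshift --- is exactly the classical argument that establishes the conjecture on surfaces, and you correctly isolate the obstruction in higher dimension: nothing guarantees that a near-maximal-entropy ergodic measure is hyperbolic, so Katok's theorem need not apply. That diagnosis is accurate and is precisely why the conjecture is still open.

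One clarification worth making: the paper's route is genuinely different from the horseshoe strategy you sketch. Rather than approximating measures by hyperbolic sets, the paper stays entirely within thermodynamic formalism: for a USC system, if one can find a potential $\phi$ with $t\phi\in\su(f)$ for all $t\ge 0$ and with a unique zero-entropy maximizing measure, then the entropy of the unique equilibrium state for $t\phi$ varies continuously in $t$ from $h(f)$ down to $0$, and the Intermediate Value Theorem does the rest. This bypasses Pesin theory and Lyapunov exponents altogether, at the cost of requiring uniqueness of equilibrium states along the whole ray --- a hypothesis that is currently verifiable only in structured situations such as the Ma\~n\'e examples.
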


Partial results on Katok's conjecture have been obtained in
\cite{KM, Sun09, Sun10, Ures, Sun12, QS, GSW, KKK, LO, 
	Sunintent,
Sunze, Sunct}. They represent two major approaches,
which are based on hyperbolic structures
and specification-like properties, respectively. 
What we illustrate here is a new one, which shows that for USC systems,
certain results on equilibrium states 
imply the intermediate entropy property.
This approach is inspired by 
some facts in
\emph{ergodic optimization}.

Denote 
$$\su(f):=\left\{\phi\in\cx:
\text{$(X,f,\phi)$ 
has  a unique equilibrium state
}\right\}.$$
For $\phi\in\cx$, denote by $\coo(f,\phi)$ 
the set of \emph{maximizing measures} for $\phi$, i.e.
invariant
measures $\mu\in\cm(f)$ that maximizes $\int\phi d\mu$.


\begin{theorem}\label{uesintent}
Let $(X,f)$ be a USC system.
Suppose that there is $\phi\in\cx$ such that: 
\begin{enumerate}
\item $t\phi\in\su(f)$ for every $t\ge 0$.
\item $\coo(f,\phi)=\{\mu_\phi\}$ is a singleton and $h_{\mu_\phi}(f)=0$.
\end{enumerate}
Then $(X,f)$ has the intermediate entropy property.
\end{theorem}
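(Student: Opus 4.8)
The plan is to follow the curve of equilibrium states of the potentials $t\phi$ as $t$ ranges over $[0,\infty)$. By the USC hypothesis and assumption (1), for each $t\ge 0$ the system $(X,f,t\phi)$ has a \emph{unique} equilibrium state, which I call $\mu_t$, and I would track the entropy $H(t):=h_{\mu_t}(f)$. The point is that $H$ should be a continuous function on $[0,\infty)$ with $H(0)=h(f)$ (since $\mu_0$ is the measure of maximal entropy) and $H(t)\to 0$ as $t\to\infty$ (since $\mu_t$ converges to the maximizing measure $\mu_\phi$, which has zero entropy), so the intermediate value theorem produces, for every $a\in(0,h(f))$, a parameter $t_a$ with $H(t_a)=a$; the measure $\mu_{t_a}$ is ergodic and has entropy $a$, while $a=0$ is handled by $\mu_\phi$ itself. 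Throughout I may assume $0<h(f)<\infty$; the case $h(f)=0$ is vacuous.

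The first, routine, point is that a \emph{unique} equilibrium state is automatically ergodic: since $P_\mu(\psi)$ is affine in $\mu$ along the ergodic decomposition and $P_\nu(\psi)\le P(\psi)$ for every ergodic $\nu$, an equilibrium state $\mu$ for $\psi$ has the property that almost every ergodic component of $\mu$ is again an equilibrium state, so if $\cE(\psi)$ is a singleton it consists of an ergodic measure. The same argument with $\int\phi\,d\mu$ in place of $P_\mu(\psi)$ shows that since $\coo(f,\phi)=\{\mu_\phi\}$ is a singleton, $\mu_\phi$ is ergodic; by (2), $h_{\mu_\phi}(f)=0$. In particular every $\mu_t$ ($t\ge0$) is ergodic, and $\mu_0$ is the unique measure of maximal entropy, so $H(0)=h(f)$.

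Next I would establish that $t\mapsto\mu_t$ is continuous for the weak$^*$ topology. Suppose $t_n\to t$ and $\mu_{t_n}\to\nu$ along a subsequence (using compactness of $\cm(f)$). Then, by upper semi-continuity of $\mu\mapsto h_\mu(f)$, continuity of $\mu\mapsto\int\phi\,d\mu$, and continuity of the pressure functional $\psi\mapsto P(\psi)$,
\[
P(t\phi)=\lim_n P(t_n\phi)=\lim_n\bigl(h_{\mu_{t_n}}(f)+t_n{\textstyle\int}\phi\,d\mu_{t_n}\bigr)\le h_\nu(f)+t{\textstyle\int}\phi\,d\nu=P_\nu(t\phi)\le P(t\phi),
\]
so $\nu$ is an equilibrium state for $t\phi$ and hence $\nu=\mu_t$ by (1); since $\cm(f)$ is compact metrizable, this forces $\mu_{t_n}\to\mu_t$. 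Continuity of $t\mapsto\mu_t$ and of $\phi$ then makes $t\mapsto\int\phi\,d\mu_t$ continuous, so $H(t)=P(t\phi)-t\int\phi\,d\mu_t$ is continuous on $[0,\infty)$.

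Finally I would analyse the zero-temperature limit $t\to\infty$. Writing $\beta:=\max_{\mu\in\cm(f)}\int\phi\,d\mu=\int\phi\,d\mu_\phi$, the bounds $t\beta=P_{\mu_\phi}(t\phi)\le P(t\phi)=H(t)+t\int\phi\,d\mu_t$ and $0\le H(t)\le h(f)$ give $0\le\beta-\int\phi\,d\mu_t\le h(f)/t$, so $\int\phi\,d\mu_t\to\beta$; hence every weak$^*$ limit point of $\mu_t$ as $t\to\infty$ is a maximizing measure, equals $\mu_\phi$, and so $\mu_t\to\mu_\phi$. Upper semi-continuity of entropy then yields $\limsup_{t\to\infty}H(t)\le h_{\mu_\phi}(f)=0$, i.e.\ $H(t)\to 0$. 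Combining this with $H(0)=h(f)$ and continuity of $H$, the intermediate value theorem gives, for each $a\in(0,h(f))$, some $t_a\ge0$ with $H(t_a)=a$, and $\mu_{t_a}$ is an ergodic measure with $h_{\mu_{t_a}}(f)=a$; together with $\mu_\phi$ for $a=0$ this yields $\sh(f)\supseteq[0,h(f))$. The step I expect to require the most care is the weak$^*$-continuity of $t\mapsto\mu_t$ together with the identification of the $t\to\infty$ limit: both genuinely combine the two uniqueness hypotheses with upper semi-continuity of entropy, and without uniqueness one would obtain only semi-continuity of $H$, which is not enough to run the intermediate value argument.
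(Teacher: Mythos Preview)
Your proof is correct and follows essentially the same route as the paper: the paper packages the continuity of $t\mapsto h_{\mu_t}(f)$ as a special case of a more general statement (continuity of $\phi\mapsto P^\psi(\phi)$ on $\sv(f,\psi)$, Proposition~\ref{entcts}) and handles the zero-temperature limit via Proposition~\ref{zerotemp}, then deduces Theorem~\ref{uesintent} from Theorem~\ref{intpres} with $\psi=0$, but the underlying ideas---limits of equilibrium states are equilibrium states, the $t\to\infty$ limit is maximizing, then apply the Intermediate Value Theorem---are exactly yours. Your direct argument that $\mu_t\to\mu_\phi$ (rather than working with subsequential limits as the paper does) is a mild simplification available precisely because of the uniqueness hypothesis~(2).
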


In fact, the ergodic measures of intermediate entropies we obtain in
Theorem \ref{uesintent} are just the unique equilibrium states for
$t\phi$, whose metric entropy varies continuously with $t$. 
When $\coo(f,\phi)=\{\mu_\phi\}$ is a singleton, $\mu_\phi$ is 
the unique \emph{ground state}
and the \emph{zero temperature limit} of these equilibrium states.

We shall prove a more general result than Theorem \ref{uesintent}.
We can 
relax the uniqueness of equilibrium states and 
conclude on intermediate pressures.
For $\psi\in\cx
$, let $\sv(f,\psi)$ be the set of
all continuous potentials $\phi$ such that
$P_\mu(\psi)$ is a constant, denoted by $P^{\psi}(\phi)$, 
for all $\mu\in\sE(\phi)$.
That is,
$$\sv(f,\psi):=\left\{\phi\in\cx:
\text{
$P_\mu(\psi)=P^{\psi}(\phi)$ for every
$\mu\in\cE(\phi)$}
\right\}.$$
In particular, 
$\sv(f):=
\sv(f,0)$ is
the set of all potentials whose equilibrium states have 
equal entropies.
By definition, we have
$\psi\in\sv(f,\psi)$ 
and $\su(f)\subset\sv(f,\psi)$ for every $\psi\in\cx$.
\begin{theorem}\label{intpres}
Let $(X,f)$ be a USC system.
Suppose that there are $\psi,\phi\in\cx$ and $\al\in\RR$ such that the following holds:
\begin{enumerate}
\item $\psi+t\phi\in\sv(f,\psi)$ for every $t\ge 0$.
\item $P_\mu(\psi)\le\al$ for every $\mu\in\coo(f,\phi)$.
\end{enumerate}
Then $\spp(f,\psi)\supset[\al, P(\psi)]$.
\end{theorem}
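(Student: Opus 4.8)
The plan is to study the function $t \mapsto P(\psi + t\phi)$ for $t \ge 0$ and extract from it a continuous curve of pressures $P_\mu(\psi)$ realized by equilibrium states, then invoke the ergodic decomposition to pass to ergodic measures. Write $Q(t) := P(\psi + t\phi)$. Since $(X,f)$ is USC, for each $t$ the set $\cE(\psi + t\phi)$ is nonempty; pick $\mu_t \in \cE(\psi + t\phi)$. By hypothesis (1), $\psi + t\phi \in \sv(f,\psi)$, so the quantity $P_{\mu_t}(\psi)$ does not depend on the choice of $\mu_t$, and we may call it $p(t) := P^{\psi}(\psi + t\phi)$. The first goal is to show $p$ is continuous on $[0,\infty)$, that $p(0) = P(\psi)$, and that $\inf_{t \ge 0} p(t) \le \al$; then $p$ takes every value in $[\al, P(\psi)]$, and each such value is $P_\mu(\psi)$ for some $\mu \in \cm(f)$, which by ergodic decomposition (entropy and integrals are both affine, hence $P_\mu(\psi)$ is affine in $\mu$) equals a convex combination of values $P_\nu(\psi)$ with $\nu$ ergodic, so by an intermediate-value / averaging argument the same value is attained by some ergodic $\nu$ — actually one should be slightly careful here and argue that the set $\spp(f,\psi)$ of ergodic pressure values, together with $P(\psi)$, forms an interval, or directly that $p(t)$ is squeezed between ergodic values.

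Next I would establish the key identity relating $Q$, $p$, and the maximal ergodic average. For $\mu_t \in \cE(\psi + t\phi)$ we have $Q(t) = P_{\mu_t}(\psi + t\phi) = h_{\mu_t}(f) + \int \psi\, d\mu_t + t \int \phi\, d\mu_t = p(t) + t \int \phi\, d\mu_t$. Thus $Q(t) - p(t) = t \int \phi\, d\mu_t$. Standard convexity: $Q$ is convex (sup of affine functions of $t$), hence continuous on $(0,\infty)$ and continuous at $0$ by USC-type semicontinuity of pressure; moreover $Q'(t)$ exists off a countable set and equals $\int \phi\, d\mu_t$ for any equilibrium state $\mu_t$, and $Q'(t) \uparrow \max_{\mu \in \cm(f)} \int \phi\, d\mu =: m$ as $t \to \infty$, with the supremum approached. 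In fact one expects $\frac{Q(t)}{t} \to m$, and the measures $\mu_t$ accumulate (weak-$*$) on $\coo(f,\phi)$ as $t \to \infty$ — this is the zero-temperature / ground-state statement alluded to after Theorem \ref{uesintent}. Combining, $p(t) = Q(t) - t\int\phi\,d\mu_t$, and as $t \to \infty$ any weak-$*$ limit $\mu_\infty$ of $\mu_t$ lies in $\coo(f,\phi)$; using upper semicontinuity of $\mu \mapsto P_\mu(\psi)$ (which holds since entropy is USC and $\int\psi\,d\cdot$ is continuous) one gets $\limsup_{t\to\infty} p(t) \le P_{\mu_\infty}(\psi) \le \al$ by hypothesis (2). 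Together with $p(0) = P(\psi)$ and continuity of $p$, the image $p([0,\infty))$ contains $[\al, P(\psi)]$.

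For continuity of $p$: from $p(t) = Q(t) - t\,Q'(t^{\pm})$ — more precisely $p$ is the Legendre-type "intercept" of the supporting line to the convex function $Q$ at $t$ — one shows $p$ is itself monotone (nonincreasing, since the slope $\int\phi\,d\mu_t$ is nondecreasing) and that $Q$ being differentiable at $t$ forces $p$ to be continuous there; at the countably many non-differentiability points of $Q$ one argues $p$ still has matching one-sided limits because $p(t^\pm) = Q(t) - t\,Q'(t^\pm)$ and any value between $p(t^+)$ and $p(t^-)$ is realized as $P_\mu(\psi)$ for $\mu = $ a convex combination of the two one-sided equilibrium states (both of which lie in $\cE(\psi+t\phi)$, so by hypothesis (1) they share the same $P_\cdot(\psi)$ value — hence actually $p$ has no jumps at all and is genuinely continuous). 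Finally, to descend to ergodic measures: each value $a \in [\al, P(\psi)]$ equals $p(t)$ for some $t$, i.e. $a = P_{\mu_t}(\psi)$; decompose $\mu_t$ into ergodic components — $\int P_\nu(\psi)\, d\tau(\nu) = a$ where $\tau$ is the decomposition measure, so ergodic values $P_\nu(\psi)$ occur both $\ge a$ and $\le a$ (or $a$ is itself such a value), and since the set of ergodic $\psi$-pressures is "connected enough" — more robustly, one runs the whole argument with $\mu_t$ replaced by an ergodic equilibrium state when one exists, or one notes that $\spp(f,\psi) \cup \{P(\psi)\}$ is an interval by the same IVP reasoning applied to ergodic components along the curve. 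I expect the \textbf{main obstacle} to be this last descent to ergodicity together with the rigorous handling of the countably many kinks of $Q$: making precise that the "intermediate pressure" value $p(t)$ is attained by a genuinely ergodic measure, rather than merely an invariant one, without circularity. The cleanest route is probably to show directly that $\spp(f,\psi)$ is an interval containing its supremum's neighborhood below, by combining convexity of $Q$ with ergodic decomposition of the equilibrium states $\mu_t$, exactly as in the ergodic-optimization arguments the paper cites as inspiration.
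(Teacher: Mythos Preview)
Your overall strategy coincides with the paper's: show that $t\mapsto p(t):=P^{\psi}(\psi+t\phi)$ is continuous on $[0,\infty)$ with $p(0)=P(\psi)$, use the zero-temperature limit (weak-$*$ accumulation points of $\mu_t$ as $t\to\infty$ lie in $\coo(f,\phi)$) together with upper semi-continuity of $\mu\mapsto P_\mu(\psi)$ to force $p(t_N)<b$ for any given $b>\alpha$, and then invoke the Intermediate Value Theorem on $[0,t_N]$.

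Two simplifications relative to your write-up. First, the detour through convexity of $Q$ and the Legendre-type formula $p(t)=Q(t)-tQ'(t^\pm)$ is unnecessary: the paper proves directly (Proposition~\ref{entcts}) that $\phi\mapsto P^{\psi}(\phi)$ is continuous on all of $\sv(f,\psi)$, using only that weak-$*$ limits of equilibrium states are equilibrium states (Proposition~\ref{limises}). Your own parenthetical remark --- that at a putative jump the one-sided limiting measures both lie in $\cE(\psi+t\phi)$ and hence by hypothesis~(1) share the same $P_\cdot(\psi)$ --- is already the entire argument, with no convexity or differentiability of $Q$ required. Second, your announced ``main obstacle'' is not one: for each $t$ the set $\cE(\psi+t\phi)$ is a nonempty compact face of $\cm(f)$ and therefore contains ergodic measures, and hypothesis~(1) guarantees that every such ergodic equilibrium state already satisfies $P_\cdot(\psi)=p(t)$. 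So you may choose $\mu_t$ ergodic from the start, and every value attained by $p$ lies in $\spp(f,\psi)$ without any further ergodic-decomposition gymnastics.
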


Theorem \ref{uesintent} and \ref{intpres} apply to the 
Ma\~n\'e diffeomorphisms considered in \cite{CTmane} and
\cite{Sunct}. 
The following theorem
completely verifies
 Katok's conjecture for such systems, which improves \cite[Corollary 1.2]{Sunct}.
See Section \ref{secmane} for details.

\begin{theorem}
Ma\~n\'e diffeomorphisms 
have the intermediate entropy property.
\end{theorem}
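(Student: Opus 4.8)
The plan is to verify the hypotheses of Theorem~\ref{uesintent} (or the more flexible Theorem~\ref{intpres}) for Ma\~n\'e diffeomorphisms, so that the intermediate entropy property follows immediately. Recall that a Ma\~n\'e diffeomorphism is a robustly transitive, non-Anosov diffeomorphism obtained by a local bifurcation from an Anosov diffeomorphism on $\btt^3$ (or $\bttd$), so it carries a dominated splitting $E^s\oplus E^c\oplus E^u$ with $E^c$ one-dimensional and non-hyperbolic only along a small region. The first thing I would do is recall from \cite{CTmane} and \cite{Sunct} that these systems are USC (indeed, entropy-expansive, hence $h$-expansive, which gives upper semi-continuity of $\mu\mapsto h_\mu(f)$), so the standing USC hypothesis of Theorems~\ref{uesintent} and~\ref{intpres} is satisfied.

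Next I would exhibit the potential $\phi$. The natural candidate, following the ergodic-optimization philosophy of the paper, is $\phi=-\varphi^c$ (or a suitable negative multiple of it), where $\varphi^c(x)=\log\|Df|_{E^c(x)}\|$ is the central derivative cocycle; $\phi$ is continuous because the central bundle $E^c$ varies continuously under domination. The key structural input is that in the Ma\~n\'e construction there is a unique periodic orbit (a fixed point, arising where the bifurcation is performed) along which $Df|_{E^c}$ is a contraction, and this orbit supports the unique measure that maximizes $\int\phi\,d\mu = -\int\varphi^c\,d\mu$; since it is a periodic-orbit measure, its metric entropy is zero. This gives hypothesis (2) of Theorem~\ref{uesintent}: $\coo(f,\phi)=\{\mu_\phi\}$ with $h_{\mu_\phi}(f)=0$. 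I would cite the relevant estimates from \cite{Sunct} (which already establishes this maximizing-measure picture) rather than reprove them.

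For hypothesis (1), that $t\phi\in\su(f)$ for all $t\ge 0$, I would invoke the Climenhaga--Thompson-type uniqueness results specifically developed for Ma\~n\'e diffeomorphisms in \cite{CTmane}: they prove uniqueness of the equilibrium state for $q\varphi^c$ (equivalently, for the geometric potentials $-q\varphi^c$) in an interval of parameters, and \cite{Sunct} extends or adapts this to exactly the family $t\phi$, $t\ge 0$, that we need. Concretely, the argument is that $t\phi$ has small enough oscillation relative to the expansion in $E^u$ that the non-uniform specification and Bowen-regularity hypotheses of the abstract Climenhaga--Thompson theorem hold uniformly in $t\ge 0$, yielding a unique equilibrium state $\mu_t$ for each such $t$. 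With (1) and (2) in hand, Theorem~\ref{uesintent} applies verbatim and delivers ergodic measures of every intermediate entropy in $[0,h(f))$.

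The main obstacle — and the place where I would expect to spend the real effort — is confirming that the uniqueness of equilibrium states holds for the \emph{entire} half-line $t\in[0,\infty)$ and not merely a bounded parameter window. The Climenhaga--Thompson machinery typically gives uniqueness for $|q|$ small (oscillation controlled by the entropy/expansion gap), whereas here we scale $\phi$ by an arbitrarily large $t$. The resolution should use that $\phi=-\varphi^c$ is, up to a coboundary or up to bounded error, \emph{nonpositive} along $E^c$ except on the small bifurcation region, so that for large $t$ the equilibrium state is increasingly concentrated near $\mu_\phi$ and the relevant pressure gap $P(t\phi)-\sup_{\text{obstructions}}$ stays positive; one must check that the collection of ``bad'' orbit segments where specification fails does not grow with $t$, which follows from the fact that the geometry of the central bundle is $t$-independent. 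If pushing uniqueness all the way to $t=\infty$ turns out to be delicate, the fallback is to apply Theorem~\ref{intpres} with $\psi=0$ and $\alpha=0$: one then only needs $t\phi\in\sv(f)$ (equal entropies among equilibrium states), a strictly weaker condition that still yields $\sh(f)\supset[0,h(f))$, hence the intermediate entropy property, and which is more robust to the large-$t$ regime.
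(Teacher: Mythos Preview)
Your outline has a genuine gap in the choice of potential. For the Ma\~n\'e examples as set up in \cite{CTmane} and in this paper, the perturbation is supported on $B(q,\rho)$ and orbits that spend most of their time \emph{outside} $B(q,\rho)$ contract $E^c$; inside $B(q,\rho)$ the central direction may expand. So your description (``a unique fixed point where $Df|_{E^c}$ contracts'') is inverted, and more importantly $-\varphi^c$ is essentially at its maximal value on the whole complement of $B(q,\rho)$ (where $g=f_A$ and the central derivative is close to the fixed eigenvalue $\lambda_c$ of $A$). Consequently the maximizing set $\coo(g,-\varphi^c)$ is not a single point mass: every $g$-invariant measure supported outside $B(q,\rho)$ is (at least approximately) maximizing, and for small $\rho$ these include positive-entropy horseshoe measures of $f_A$. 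Neither hypothesis~(2) of Theorem~\ref{uesintent} nor the $\alpha=0$ version of Theorem~\ref{intpres} is available for $\phi=-\varphi^c$, so the fallback you sketch does not rescue the argument. The large-$t$ uniqueness issue you correctly flag is also unresolved for the geometric potential: the results of \cite{CTmane} for $t\varphi^c$ cover only a bounded parameter window.

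The paper avoids both problems by \emph{not} using the geometric potential at all. It picks a second fixed point $p\notin\overline{B(q,\rho)}$ and builds by hand an $\alpha$-H\"older function $\Psi$ with $\Psi(p)=0$, $\Psi<0$ on $\bttd\setminus\{p\}$, and $\Psi\equiv -1$ on $B(q,\rho)$. Then $\coo(g,\Psi)=\{\mu_p\}$ trivially, with $h_{\mu_p}(g)=0$, so condition~(2) is immediate. For condition~(1) the paper splits $t\ge 0$ into two regimes: for small $t$ one has $|t\Psi|_\alpha<M(\rho^*,r^*)$ and Theorem~\ref{ctm}\eqref{bddhol} applies; for large $t$ the specific values $\sup_{B(q,\rho)}t\Psi=-t$ and $\sup_{\bttd}t\Psi=0$ make the pressure gap $P(g,t\Psi)>\Xi(\rho^*,r^*,t\Psi,L)$ a one-line computation, and Theorem~\ref{ctm}\eqref{bddgen} gives uniqueness. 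The USC hypothesis comes from entropy expansiveness (\cite[Proposition~6]{CL}), and Corollary~\ref{intentropy} finishes. In short, the decisive idea you are missing is to engineer $\Psi$ so that its unique maximum is a Dirac at a fixed point \emph{away from} the perturbation region, with $\Psi$ constant on $B(q,\rho)$; this simultaneously forces the zero-entropy ground state and makes the Climenhaga--Thompson pressure-gap criterion hold for all $t$.
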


\section{Continuity of Equilibrium States} 

Readers are also referred to \cite{Walters} for definitions and
basic properties of entropies and pressures.
The following lemma is a simple observation (cf. \cite[6.8]{Ruelle}).
\begin{lemma}
\label{prelipct}
For any system $(X,f)$ and any $\phi,\psi\in\cx$, we have
\begin{equation*}
|P(\phi)-P(\psi)|\le\|\phi-\psi\|\text{ for any }\phi,\psi\in\cx.
\end{equation*}
So $P(\cdot)$ is a (Lipschitz)
continuous function on $C(X)$.
\end{lemma}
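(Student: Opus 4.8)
The plan is to read the bound straight off the Variational Principle, which identifies $P(\phi)=\sup\{P_\mu(\phi):\mu\in\cm(f)\}$ with $P_\mu(\phi)=h_\mu(f)+\int\phi\,d\mu$. First I would fix $\phi,\psi\in\cx$ and record the exact identity, valid for every invariant measure $\mu\in\cm(f)$,
\begin{equation*}
P_\mu(\phi)=h_\mu(f)+\int\phi\,d\mu=P_\mu(\psi)+\int(\phi-\psi)\,d\mu.
\end{equation*}
Because $\mu$ is a probability measure, the perturbation integral is controlled by the supremum norm: $\left|\int(\phi-\psi)\,d\mu\right|\le\int|\phi-\psi|\,d\mu\le\|\phi-\psi\|$. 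The point of keeping the entropy term bundled inside $P_\mu(\psi)$, rather than cancelling it, is to avoid manipulating $h_\mu(f)$ directly (which could a priori be infinite); since only the bounded difference $\int(\phi-\psi)\,d\mu$ is estimated, no $\infty-\infty$ ambiguity ever arises and the argument is valid for an arbitrary system, with no USC hypothesis needed.

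Next I would combine the identity with the trivial bound $P_\mu(\psi)\le P(\psi)$ coming from the Variational Principle, obtaining
\begin{equation*}
P_\mu(\phi)\le P(\psi)+\|\phi-\psi\|\quad\text{for every }\mu\in\cm(f).
\end{equation*}
Taking the supremum over all $\mu\in\cm(f)$ and invoking the Variational Principle once more yields $P(\phi)\le P(\psi)+\|\phi-\psi\|$, that is, $P(\phi)-P(\psi)\le\|\phi-\psi\|$.

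Finally, the reverse inequality $P(\psi)-P(\phi)\le\|\psi-\phi\|=\|\phi-\psi\|$ is obtained by repeating the identical argument with the roles of $\phi$ and $\psi$ interchanged. The two one-sided estimates together give $|P(\phi)-P(\psi)|\le\|\phi-\psi\|$, and Lipschitz continuity of $P(\cdot)$ on $C(X)$ follows at once. There is no genuine obstacle in this proof; the only item deserving a moment's attention is the order of operations just noted, namely bounding the perturbation integral before passing to the supremum, so that the entropy term is never split off on its own.
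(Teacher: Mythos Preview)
Your argument is correct and is precisely the standard one: bound $P_\mu(\phi)-P_\mu(\psi)=\int(\phi-\psi)\,d\mu$ by $\|\phi-\psi\|$ for each $\mu$, then take the supremum and use the Variational Principle. The paper does not actually write out a proof of this lemma---it simply records it as ``a simple observation'' with a reference to Ruelle---so there is nothing further to compare; your write-up supplies exactly the details that the citation stands in for.
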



We try to make our results as more applicable as possible.
Proposition \ref{limises} and \ref{zerotemp} are slight variations of
known results (cf. \cite[Theorem 4.2.11]{Kell} and \cite[Theorem 4.1]{Jenk}).
In particular, \eqref{munest} and \eqref{presclose} 
(hence Proposition \ref{limises} and Proposition
\ref{zerotemp}) hold 
when the measures are actually the equilibrium states
for the corresponding potentials, i.e.
$\mu_n\in\cE(\phi_n)$ or $\mu_n\in\cE(\psi+t_n\phi_n)$ for all $n$. 

\begin{proposition}\label{limises}
Let $(X,f)$ be a USC system.
Let $\{\phi_n\}_{n=1}^\infty$ be a sequence of continuous potentials
such that $\|\phi_n-\phi\|\to0$. 
Let $\{\mu_n\}_{n=1}^\infty$ be a sequence in $\cm(f)$ such that
$\mu_n\to\mu$ 
and
\begin{equation}\label{munest}
\lim_{n\to\infty}\left|P_{\mu_n}(\phi_n)-P(\phi_n)\right|=0.
\end{equation}
Then $\mu$ is an equilibrium state for $\phi$, i.e. $\mu\in\cE(\phi)$.

\end{proposition}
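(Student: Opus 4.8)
The plan is to combine the upper semi-continuity of the entropy map with the Lipschitz continuity of the pressure functional (Lemma \ref{prelipct}) and the continuity of the map $\nu \mapsto \int \phi \, d\nu$ under weak-$*$ convergence. The target is to show $P_\mu(\phi) \ge P(\phi)$, since the reverse inequality is automatic from the Variational Principle; equivalently, $h_\mu(f) + \int \phi \, d\mu \ge P(\phi)$.

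\begin{proof}
By the Variational Principle, $P_\mu(\phi) \le P(\phi)$, so it suffices to prove the reverse inequality. Since $\mu_n \to \mu$ in the weak-$*$ topology, upper semi-continuity of the entropy map gives
\begin{equation*}
h_\mu(f) \ge \limsup_{n\to\infty} h_{\mu_n}(f).
\end{equation*}
The convergence $\|\phi_n - \phi\| \to 0$ together with $\left| \int \phi_n \, d\mu_n - \int \phi \, d\mu \right| \le \|\phi_n - \phi\| + \left| \int \phi \, d\mu_n - \int \phi \, d\mu \right|$ and $\mu_n \to \mu$ yields $\int \phi_n \, d\mu_n \to \int \phi \, d\mu$. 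Hence
\begin{equation*}
P_\mu(\phi) = h_\mu(f) + \int \phi \, d\mu \ge \limsup_{n\to\infty} \left( h_{\mu_n}(f) + \int \phi_n \, d\mu_n \right) = \limsup_{n\to\infty} P_{\mu_n}(\phi_n).
\end{equation*}
Now $P_{\mu_n}(\phi_n) = P(\phi_n) + \left( P_{\mu_n}(\phi_n) - P(\phi_n) \right)$. By \eqref{munest} the error term tends to $0$, and by Lemma \ref{prelipct}, $|P(\phi_n) - P(\phi)| \le \|\phi_n - \phi\| \to 0$, so $P_{\mu_n}(\phi_n) \to P(\phi)$. Therefore $P_\mu(\phi) \ge P(\phi)$, and combined with the Variational Principle, $P_\mu(\phi) = P(\phi)$, i.e. $\mu \in \cE(\phi)$.
\end{proof}

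The only subtle point is the step $\int \phi_n \, d\mu_n \to \int \phi \, d\mu$, which I would handle by splitting into the two contributions as above — the uniform convergence controls the change of potential uniformly over all measures, and the weak-$*$ convergence handles the fixed potential $\phi$. Everything else is a direct assembly of the hypotheses; the genuine content is carried by the USC assumption, which is exactly what makes $h_\mu(f)$ large enough in the limit. The parenthetical remark about $\mu_n \in \cE(\phi_n)$ is immediate: in that case $P_{\mu_n}(\phi_n) = P(\phi_n)$ exactly, so \eqref{munest} holds trivially.
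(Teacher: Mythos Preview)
Your proof is correct and follows essentially the same approach as the paper: both combine upper semi-continuity of entropy, weak-$*$ convergence of $\int\phi_n\,d\mu_n$, and Lipschitz continuity of $P(\cdot)$ to conclude $P_\mu(\phi)\ge P(\phi)$. The only cosmetic difference is that the paper unpacks the argument with an explicit $\varepsilon$--$N$ formulation, whereas you phrase it more compactly via $\limsup$; the content is identical.
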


\begin{proof}
As $P(\cdot)$ is continuous, we have $P(\phi_n)\to P(\phi)$. So
\eqref{munest} is equivalent to the condition
\begin{equation}\label{munest2}
\lim_{n\to\infty}\left|P_{\mu_n}(\phi_n)-P(\phi)\right|=0.
\end{equation}
We fix a metric $D$ on $\cm(f)$ that induces the weak-$*$ topology.
For every $\ep>0$, as the entropy map is upper semi-continuous,
there is $\eta_\ep>0$ such that for every 
$\nu\in B(\mu,\eta_\ep)$
we have
\begin{equation}\label{localest}
h_\nu(f)<h_\mu(f)+\ep\text{ and }
\left|\int\phi d\nu-\int\phi d\mu\right|<\ep.
\end{equation}
By \eqref{munest2}, 
for 
every $\ep>0$, 
there is $N$ such that 
\begin{equation}\label{limest}
\|\phi_N-\phi\|<\ep, D(\mu_N,\mu)<\eta_\ep
\text{ and }P_{\mu_N}(\phi_N)>P(\phi)-\ep.
\end{equation}
Hence by \eqref{localest}, we have
\begin{equation}\label{intest}
h_\mu(f)>h_{\mu_N}(f)-\ep\text{ and }
\left|\int\phi d\mu_N-\int\phi d\mu\right|<\ep.
\end{equation}
By 
\eqref{limest} and 
\eqref{intest}, we have
\begin{align*}
P_\mu(\phi)&=h_\mu(f)+\int\phi d\mu
\\&> h_{\mu_N}(f)-\ep+\int\phi d\mu_N-
\left|\int\phi d\mu_N-\int\phi d\mu\right|
\\&> h_{\mu_N}(f)-\ep+\left(\int\phi_N d\mu_N-
\left|\int\phi_N d\mu_N-\int\phi d\mu_N\right|\right)-\ep
\\&>P_{\mu_N}(\phi_N)-\|\phi_N-\phi\|-2\ep
\\&>P(\phi)-4\ep.
\end{align*}
This implies that $P_\mu(\phi)\ge P(\phi)$ as it holds 
for all $\ep>0$.
Hence $\mu$ is an equilibrium state for $\phi$. 
\end{proof}

\begin{proposition}\label{zerotemp}
Let $(X,f)$ be any system and 
$\psi,\phi\in\cx$.
Let $\{t_n\}_{n=1}^\infty$ be a sequence of real numbers
such that $t_n\to\infty$.
Let  $\{\mu_n\}_{n=1}^\infty$ be a sequence in
$\cm(f)$
such that $\mu_n\to\mu$ and
\begin{equation}\label{presclose}
\lim_{n\to\infty}\frac{1}{t_n}
\left|P_{\mu_n}(\psi+t_n\phi)-P(\psi+t_n\phi)\right|=0.
\end{equation}
Then $\mu$ is a maximizing measure for $\phi$, i.e. $\mu\in\coo(\phi)$.
\end{proposition}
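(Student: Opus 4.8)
The plan is to show directly that $\int\phi d\mu=\beta$, where
$\beta:=\sup\{\int\phi d\nu:\nu\in\cm(f)\}$. Since $\cm(f)$ is weak-$*$ compact and $\nu\mapsto\int\phi d\nu$ is continuous, $\beta\le\|\phi\|<\infty$ and the supremum is attained at some $\nu_0\in\cm(f)$. As $\mu_n\to\mu$ forces $\int\phi d\mu_n\to\int\phi d\mu$, and $\int\phi d\mu_n\le\beta$ for each $n$, the inequality $\int\phi d\mu\le\beta$ is automatic; the whole point is to prove $\int\phi d\mu\ge\beta$.

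To this end I would assemble three elementary estimates. First, applying the Variational Principle to $\nu_0$, for every $t\ge0$,
\begin{equation*}
P(\psi+t\phi)\ge P_{\nu_0}(\psi+t\phi)=h_{\nu_0}(f)+\int\psi d\nu_0+t\beta\ge-\|\psi\|+t\beta,
\end{equation*}
because $h_{\nu_0}(f)\ge0$. Second, applying the Variational Principle to $\mu_n$ and the potential $\psi$ shows that the entropies are uniformly bounded: $h_{\mu_n}(f)\le P(\psi)-\int\psi d\mu_n\le P(\psi)+\|\psi\|=:C$. Third, $P_{\mu_n}(\psi+t_n\phi)\le P(\psi+t_n\phi)$ by the Variational Principle, so after discarding the finitely many indices with $t_n\le0$ the numbers $\delta_n:=\tfrac{1}{t_n}\bigl(P(\psi+t_n\phi)-P_{\mu_n}(\psi+t_n\phi)\bigr)$ are nonnegative, and $\delta_n\to0$ by \eqref{presclose}.

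Now I would combine them. Since $P(\psi+t_n\phi)=P_{\mu_n}(\psi+t_n\phi)+t_n\delta_n$,
\begin{align*}
-\|\psi\|+t_n\beta&\le P(\psi+t_n\phi)\\
&=h_{\mu_n}(f)+\int\psi d\mu_n+t_n\int\phi d\mu_n+t_n\delta_n\\
&\le C+\|\psi\|+t_n\int\phi d\mu_n+t_n\delta_n.
\end{align*}
Dividing by $t_n>0$ and letting $n\to\infty$, the terms $\|\psi\|/t_n$, $(C+\|\psi\|)/t_n$ and $\delta_n$ all tend to $0$, while $\int\phi d\mu_n\to\int\phi d\mu$, so $\beta\le\int\phi d\mu$. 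Together with $\int\phi d\mu\le\beta$ this yields $\int\phi d\mu=\beta$, i.e.\ $\mu\in\coo(\phi)$.

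The argument is soft and I do not expect a genuine obstacle; the only point requiring care is the finiteness bookkeeping behind the constant $C$, which is finite precisely when $P(\psi)<\infty$, equivalently (by Lemma \ref{prelipct} and $P(0)=h(f)$) when $h(f)<\infty$. If $h(f)=\infty$ then $P(\psi+t_n\phi)=\infty$ and \eqref{presclose} is vacuous. In the situation relevant to the applications, where $\mu_n\in\cE(\psi+t_n\phi)$ is a genuine equilibrium state, one even has $\delta_n=0$ for every $n$, so \eqref{presclose} holds automatically.
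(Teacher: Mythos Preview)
Your argument is correct and rests on the same core observation as the paper's proof: the contributions of $h_{\mu_n}(f)$ and $\int\psi\,d\mu_n$ to $P_{\mu_n}(\psi+t_n\phi)$ are uniformly bounded, so after dividing by $t_n$ only the $\int\phi\,d\mu_n$ term survives and must match the maximum. The packaging differs slightly. The paper fixes $\ep>0$, works with an arbitrary competitor $\nu\in\cm(f)$, and assembles the telescoping identity
\[
t_n\Bigl(\textstyle\int\phi\,d\mu-\int\phi\,d\nu\Bigr)=\bigl(P_\mu(\psi+t_n\phi)-P_\mu(\psi)\bigr)-\bigl(P_\nu(\psi+t_n\phi)-P_\nu(\psi)\bigr),
\]
bounding each piece by a multiple of $t_n\ep$ via the crude estimate $|h_\mu-h_{\mu_n}|\le h(f)$. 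Your version is more economical: you anchor at a single maximizer $\nu_0$, use the Variational Principle once to bound $P(\psi+t_n\phi)$ from below and once to bound $h_{\mu_n}(f)$ from above, and take a single limit. Both routes need finite topological entropy, as you note; the paper's proof uses it implicitly at the line $|h_\mu(f)-h_{\mu_n}(f)|<h(f)$.
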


\begin{proof}
For every $\ep>0$, 
there is $\eta_\ep>0$ such that for every 
$\nu\in B(\mu,\eta_\ep)$
we have
\begin{equation}\label{localest2}
\left|\int\psi d\nu-\int\psi d\mu\right|<\ep
\text{ and }
\left|\int\phi d\nu-\int\phi d\mu\right|<\ep.
\end{equation}

As $t_n\to\infty$, $\mu_n\to\mu$, by \eqref{presclose},
for 
every $\ep>0$, 
there is $N$ such that for all $n>N$ we have
\begin{equation}\label{limest2}
t_n\ep>h(f),
D(\mu_n,\mu)<\eta_\ep 
\text{ and }
P(\psi+t_n\phi)-P_{\mu_n}(\psi+t_n\phi)<t_n\ep
\end{equation}
By \eqref{localest2} and \eqref{limest2},
for all $n>N$, we have
\begin{align}
\left|P_\mu(\psi)-P_{\mu_n}(\psi)\right|
\le&\left|h_\mu(f)-h_{\mu_n}(f)\right|
+\left|\int\psi d\mu-\int\psi d\mu_n\right| \notag
\\<& h(f)+\ep 
<(t_n+1)\ep\label{pmunest1}
\end{align}
and hence
\begin{align}\label{pmunest2}
&P(\psi+t_n\phi)-P_{\mu}(\psi+t_n\phi) \notag
\\\le&P(\psi+t_n\phi)-P_{\mu_n}(\psi+t_n\phi)+
\left|P_\mu(\psi+t_n\phi)-P_{\mu_n}(\psi+t_n\phi)\right| \notag
\\<& 
\left|P_\mu(\psi)-P_{\mu_n}(\psi)\right| 
+t_n\left|\int\phi d\mu-\int\phi d\mu_n\right|
+t_n\ep \notag
\\<& (3t_n+1)\ep.
\end{align}
By \eqref{pmunest1} and \eqref{pmunest2}, 
for every $\nu\in\cm(f)$ and all $n>N$, we have
\begin{align}
&t_n\left(\int\phi d\mu-\int\phi d\nu\right)\notag
\\
=
&\left(P_\mu(\psi+t_n\phi)-P_\mu(\psi)\right)
-
\left(P_\nu(\psi+t_n\phi)-P_\nu(\psi)\right) \notag
\\>&\left(P(\psi+t_n\phi)-P_\nu(\psi+t_n\phi)\right) \notag
-\left(P(\psi+t_n\phi)-P_{\mu}(\psi+t_n\phi)\right)
\\&
-\left|P_\mu(\psi)-P_\nu(\psi)\right| \notag
\\>&-(4t_n+1)\ep. \label{tnest}
\end{align}
As $t_n\to\infty$, \eqref{tnest} implies that
\begin{equation}\label{phi4ep}
\int\phi d\mu\ge\int\phi d\nu-4\ep \text{ for every }\nu\in\cm(f).
\end{equation}
This implies that $\int\phi d\mu\ge\int\phi d\nu$ as \eqref{phi4ep} holds 
for all $\ep>0$. 
Hence $\mu$ is a maximizing measure for $\phi$. 
\end{proof}

\section{Intermediate Pressures}

\begin{proposition}\label{entcts}
Let $(X,f)$ be a USC system and $\psi\in C(X)$.
Then the function $\phi\mapsto P^{\psi}(\phi)$ is 
continuous
on $\sv(f,\psi)$.
\end{proposition}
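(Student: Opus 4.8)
The plan is to reduce the statement to the continuity of a simpler functional — one built from integrals of continuous potentials rather than from entropies — because the entropy map is only upper semi-continuous and cannot be manipulated directly. Fix $\psi\in\cx$ and let $\phi\in\sv(f,\psi)$. For any equilibrium state $\mu\in\cE(\phi)$ one has $h_\mu(f)+\int\phi\,d\mu=P(\phi)$ and, because $\phi\in\sv(f,\psi)$, also $h_\mu(f)+\int\psi\,d\mu=P^{\psi}(\phi)$; subtracting, $\int(\phi-\psi)\,d\mu=P(\phi)-P^{\psi}(\phi)$, a quantity that does not depend on which $\mu\in\cE(\phi)$ we choose. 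Writing $c^{\psi}(\phi)$ for this common value $\int(\phi-\psi)\,d\mu$, we obtain the identity $P^{\psi}(\phi)=P(\phi)-c^{\psi}(\phi)$. Since $P(\cdot)$ is continuous by Lemma \ref{prelipct}, it suffices to prove that $\phi\mapsto c^{\psi}(\phi)$ is continuous on $\sv(f,\psi)$.

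To prove this I would argue along subsequences. Suppose $\phi_n\to\phi$ with all $\phi_n,\phi\in\sv(f,\psi)$; it is enough to show that every subsequence of $\bigl(c^{\psi}(\phi_n)\bigr)_n$ has a further subsequence converging to $c^{\psi}(\phi)$. Given such a subsequence, for each $n$ choose an equilibrium state $\mu_n\in\cE(\phi_n)$ (nonempty since $(X,f)$ is USC); because $\phi_n\in\sv(f,\psi)$, the value $c^{\psi}(\phi_n)=\int(\phi_n-\psi)\,d\mu_n$ is the same no matter which $\mu_n$ was picked. By weak-$*$ compactness of $\cm(f)$ we may pass to a further subsequence, still written $\mu_n$, with $\mu_n\to\mu$ for some $\mu\in\cm(f)$. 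Here I invoke Proposition \ref{limises}: since $\mu_n\in\cE(\phi_n)$, condition \eqref{munest} holds trivially, so $\mu\in\cE(\phi)$. Finally, using $\bigl|\int(\phi_n-\psi)\,d\mu_n-\int(\phi-\psi)\,d\mu_n\bigr|\le\|\phi_n-\phi\|\to0$ together with weak-$*$ convergence, we get $\int(\phi_n-\psi)\,d\mu_n\to\int(\phi-\psi)\,d\mu$, and the right-hand side equals $c^{\psi}(\phi)$ because $\mu\in\cE(\phi)$ and $\phi\in\sv(f,\psi)$. Hence $c^{\psi}(\phi_n)\to c^{\psi}(\phi)$ along this subsequence, which is what we needed.

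The conceptual heart — and the one point needing care — is the passage from $P_\mu(\psi)=h_\mu(f)+\int\psi\,d\mu$ to the expression $P(\phi)-\int(\phi-\psi)\,d\mu$. Upper semi-continuity of entropy alone yields only $\limsup_n P^{\psi}(\phi_n)\le P^{\psi}(\phi)$, not continuity, and the reverse inequality genuinely fails in the naive approach since entropy can jump up in a weak-$*$ limit; rewriting the entropy away by using that $\mu$ is an equilibrium state for $\phi$ converts the obstruction into an integral of the \emph{fixed} continuous function $\phi-\psi$, which is automatically well behaved under weak-$*$ limits combined with $\|\phi_n-\phi\|\to0$. The other subtlety to flag in the write-up is that membership in $\sv(f,\psi)$ is used twice for different purposes: for each $\phi_n$, to make $c^{\psi}(\phi_n)$ independent of the chosen equilibrium state $\mu_n$; and for $\phi$ itself, to identify the limit $\int(\phi-\psi)\,d\mu$ with $c^{\psi}(\phi)$.
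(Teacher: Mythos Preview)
Your proof is correct and is essentially the paper's argument, just organized more transparently: the paper also picks $\mu_n\in\cE(\phi_n)$, passes to a subsequential limit $\mu$, applies Proposition~\ref{limises} to get $\mu\in\cE(\tilde\phi)$, and then bounds $|P^{\psi}(\phi_{n_k})-P^{\psi}(\tilde\phi)|$ by rewriting $P_{\mu}(\psi)=P_{\mu}(\phi)-\int(\phi-\psi)\,d\mu$ inside a chain of inequalities---the same identity you isolate upfront as $P^{\psi}(\phi)=P(\phi)-c^{\psi}(\phi)$. Your explicit extraction of $c^{\psi}$ makes the role of the equilibrium-state relation clearer, but the logical content is identical.
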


\begin{proof}

Let $\{\phi_n\}_{n=1}^\infty$ be a sequence in $\sv(f,\psi)$
such that
$\phi_n\to\tilde\phi\in\sv(f,\psi)$. 
For each $n$, take any $\mu_n\in\sE(\phi_n)$.
Let $\mu$ be the limit of a convergent
subsequence $\{\mu_{n_k}\}_{k=1}^\infty$.
As $\phi_{n_k}\to\tilde\phi$,
by Proposition \ref{limises}, we have $\mu\in\sE(\tilde\phi)$.

By Lemma \ref{prelipct},
we have
\begin{align*}
&\lim_{k\to\infty}\left|P^{\psi}(\phi_{n_k})-P^{\psi}(\tilde\phi)\right|
\\= &
\lim_{k\to\infty}\left|P_{\mu_{n_k}}(\psi)-P_{\mu}(\psi)\right|
\\\le &
\lim_{k\to\infty}\left(\left|P_{\mu_{n_k}}(\phi_{n_k})-P_{\mu}(\tilde\phi)\right|
+\left|\int(\phi_{n_k}-\psi)d_{\mu_{n_k}}-\int(\tilde\phi-\psi) d_{\mu}\right|\right)
\\\le&\lim_{k\to\infty}\left|P(\phi_{n_k})-P(\tilde\phi)\right|
+\lim_{k\to\infty}\left|\int(\phi_{n_k}-\tilde\phi) d_{\mu_{n_k}}\right|
\\&
+\lim_{k\to\infty}\left|\int(\tilde\phi-\psi) d_{\mu_{n_k}}-\int(\tilde\phi-\psi)d_{\mu}\right|
\\\le&\lim_{k\to\infty}\|\phi_{n_k}-\tilde\phi\|+\lim_{k\to\infty}\|\phi_{n_k}-\tilde\phi\|+0
\\=& 0.
\end{align*}
This implies that $P^\psi(\cdot)$ is a
continuous function on $\sv(f,\psi)$.
\end{proof}

\begin{corollary}\label{entctscor}
	Let $(X,f)$ be a USC system.
	Denote by $\mu_\phi$ the unique equilibrium state for each $\phi\in\su(f)$.
	Then the map $\phi\mapsto h_{\mu_\phi}(f)$ is 
	continuous
	on $\su(f)$.
\end{corollary}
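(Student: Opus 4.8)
The plan is to deduce this immediately from Proposition \ref{entcts} specialized to $\psi=0$. First I would recall the inclusion $\su(f)\subset\sv(f)=\sv(f,0)$ noted after the definition of $\sv(f,\psi)$: a potential with a \emph{unique} equilibrium state trivially has all of its equilibrium states sharing the same entropy, so it lies in $\sv(f,0)$. Hence $\su(f)$ sits inside the domain on which $P^0(\cdot)$ is defined and, by Proposition \ref{entcts}, continuous.

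Next I would identify $P^0$ with the entropy of the equilibrium state. By definition $P^{\psi}(\phi)$ is the common value of $P_\mu(\psi)$ over $\mu\in\cE(\phi)$; taking $\psi=0$ gives $P^0(\phi)=h_\mu(f)$ for every $\mu\in\cE(\phi)$. When $\phi\in\su(f)$ the set $\cE(\phi)$ is the singleton $\{\mu_\phi\}$, so $P^0(\phi)=h_{\mu_\phi}(f)$.

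Finally, the map $\phi\mapsto h_{\mu_\phi}(f)$ on $\su(f)$ is therefore precisely the restriction to $\su(f)$ of the function $P^0(\cdot)$, which is continuous on $\sv(f,0)\supset\su(f)$ by Proposition \ref{entcts}. The restriction of a continuous function to a subset (with the subspace topology coming from the $C^0$ norm on $C(X)$) is continuous, so the corollary follows.

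There is essentially no obstacle here; the only point worth double-checking is that the hypothesis of Proposition \ref{entcts} — that the approximating sequence $\{\phi_n\}$ and its limit both lie in $\sv(f,0)$ — is automatically satisfied once they lie in $\su(f)$, which is immediate from $\su(f)\subset\sv(f,0)$. In particular one does not need $\su(f)$ to be closed in $C(X)$: continuity is tested along sequences that remain inside the set, and along any such sequence the computation in the proof of Proposition \ref{entcts} applies verbatim.
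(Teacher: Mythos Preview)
Your proof is correct and follows exactly the same route as the paper: specialize Proposition~\ref{entcts} to $\psi=0$, use the inclusion $\su(f)\subset\sv(f,0)$, and identify $P^0(\phi)=h_{\mu_\phi}(f)$ for $\phi\in\su(f)$. You have simply spelled out in more detail what the paper compresses into two sentences.
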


\begin{proof}
	Note that $\su(f)\in\sv(f)$ and $P^0(\phi)=P_{\mu_\phi}(0)=h_{\mu_\phi}(f)$
	for every $\phi\in\su(f)$.
	Apply Proposition \ref{entcts} for $\psi=0$.
\end{proof}

\begin{proposition}\label{propmain}
Let $(X,f)$ be a USC system and $\psi\in\cx$. Suppose that there is a
continuous path
$\Phi:[0,T]\to\sv(f,\psi)$ and
$P_\psi(\Phi(0))<P_\psi(\Phi(T))$.
Then
\begin{equation}\label{intmed}
\spp(f,\psi)\supset[P^\psi(\Phi(0)),P^\psi(\Phi(T))]
\end{equation}


\end{proposition}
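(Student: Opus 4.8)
The plan is to reduce \eqref{intmed} to the intermediate value theorem applied to the scalar function
$$g(t):=P^\psi(\Phi(t)),\qquad t\in[0,T],$$
which is well defined precisely because $\Phi$ takes values in $\sv(f,\psi)$ and, for a USC system, $\cE(\phi)$ is nonempty for every $\phi\in\cx$, so that $P^\psi(\phi)$ makes sense.

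First I would check that $g$ is continuous on $[0,T]$. Indeed, $g$ is the composition of the continuous path $\Phi:[0,T]\to\sv(f,\psi)$ with the functional $\phi\mapsto P^\psi(\phi)$, and the latter is continuous on $\sv(f,\psi)$ by Proposition \ref{entcts} (this is the only place the USC hypothesis enters, via Proposition \ref{limises}). Since $g(0)=P^\psi(\Phi(0))<P^\psi(\Phi(T))=g(T)$, the intermediate value theorem yields, for every $a\in[P^\psi(\Phi(0)),P^\psi(\Phi(T))]$, some $t_a\in[0,T]$ with $g(t_a)=a$; for the two endpoints one takes $t_a=0$ and $t_a=T$ respectively.

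Next, fixing such an $a$ and writing $\phi_a:=\Phi(t_a)\in\sv(f,\psi)$, I would invoke the standard fact that for a USC system $\cE(\phi_a)$ is a nonempty face, hence contains an ergodic measure $\mu\in\cm_e(f)$ (see \cite{Walters, Ruelle}; alternatively this follows from the ergodic decomposition together with affinity of $\nu\mapsto h_\nu(f)+\int\psi\,d\nu$ and the variational principle). By the definition of $\sv(f,\psi)$ we have $P_\nu(\psi)=P^\psi(\phi_a)$ for every $\nu\in\cE(\phi_a)$, so in particular $P_\mu(\psi)=P^\psi(\phi_a)=g(t_a)=a$. Thus $a\in\spp(f,\psi)$, and since $a$ was an arbitrary point of the interval, \eqref{intmed} follows.

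The argument is short and I do not expect a serious obstacle; the only two points needing care are (i) that $g$ is genuinely real-valued and continuous, which rests on $\Phi$ staying inside $\sv(f,\psi)$ and on Proposition \ref{entcts}, and (ii) passing from a value attained by $P^\psi$ on $\sv(f,\psi)$ to a value attained by an honest \emph{ergodic} measure, which is exactly the face property of $\cE(\phi_a)$.
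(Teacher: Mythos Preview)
Your argument is correct and follows exactly the paper's approach: compose $\Phi$ with the continuous functional $P^\psi$ from Proposition \ref{entcts} and apply the Intermediate Value Theorem. Your additional step (ii), producing an \emph{ergodic} equilibrium state from the face property of $\cE(\phi_a)$, is left implicit in the paper but is indeed needed to land in $\spp(f,\psi)$, so making it explicit is an improvement rather than a deviation.
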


\begin{proof}
By Proposition \ref{entcts}, $P^\psi(\cdot)$ is continuous on
$\sv(f,\psi)$. Hence
$P^\psi\circ\Phi$ is continuous. Then \eqref{intmed} follows from
the Intermediate Value Theorem.
\end{proof}

Analogous to Corollary \ref{entctscor}, we have:

\begin{corollary}
Let $(X,f)$ be a USC system. Suppose that there is a
continuous path
$\Phi:[0,T]\to\su(f)$,
i.e. $\Phi(t)$ has a unique equilibrium state $\mu_t$ for each $t\in[0,T]$.
Assume that $h_{\mu_0}(f)\le h_{\mu_T}(f)$.
Then we have
\begin{equation*}
\sh(f)\supset[h_{\mu_0}(f),h_{\mu_T}(f)]
\end{equation*}
\end{corollary}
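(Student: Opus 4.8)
The plan is to derive this as the $\psi=0$ specialization of Proposition \ref{propmain}, in exactly the way Corollary \ref{entctscor} specializes Proposition \ref{entcts}. First I would record the two notational identifications that drive the reduction. Since $\su(f)\subset\sv(f)=\sv(f,0)$, the given path $\Phi$ already takes values in $\sv(f,0)$ and is therefore an admissible path for Proposition \ref{propmain} with $\psi=0$. Moreover, for each $\phi\in\su(f)$ with unique equilibrium state $\mu_\phi$ we have $P^0(\phi)=P_{\mu_\phi}(0)=h_{\mu_\phi}(f)$, while by definition $\spp(f,0)=\sh(f)$. Thus $P^0(\Phi(t))=h_{\mu_t}(f)$, and the target interval $[h_{\mu_0}(f),h_{\mu_T}(f)]$ is literally $[P^0(\Phi(0)),P^0(\Phi(T))]$.

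Next I would dispose of the nondegenerate case $h_{\mu_0}(f)<h_{\mu_T}(f)$ by invoking Proposition \ref{propmain} verbatim: its hypothesis $P^0(\Phi(0))<P^0(\Phi(T))$ now holds, and its conclusion reads $\sh(f)=\spp(f,0)\supset[h_{\mu_0}(f),h_{\mu_T}(f)]$, which is the assertion. The mechanism inside Proposition \ref{propmain} is the Intermediate Value Theorem applied to the continuous map $t\mapsto P^0(\Phi(t))=h_{\mu_t}(f)$; continuity is supplied by Corollary \ref{entctscor}, i.e. Proposition \ref{entcts} at $\psi=0$. So for every $a\in[h_{\mu_0}(f),h_{\mu_T}(f)]$ there is $t$ with $h_{\mu_t}(f)=a$, and $\mu_t$ contributes $a$ to $\sh(f)$.

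The only hypothesis the bare invocation leaves to check is the boundary case, since the corollary permits equality $h_{\mu_0}(f)\le h_{\mu_T}(f)$ whereas Proposition \ref{propmain} is phrased with strict inequality. When $h_{\mu_0}(f)=h_{\mu_T}(f)$ the asserted interval degenerates to the single value $a=h_{\mu_0}(f)$, so it suffices to exhibit one ergodic measure of entropy $a$. Here I would use that $\cE(\Phi(0))=\{\mu_0\}$ is a singleton; since in a USC system $\cE(\phi)$ is a face and every face contains an ergodic measure (cf. \cite{Walters,Ruelle}), its unique element $\mu_0$ is itself ergodic. Hence $\mu_0\in\cm_e(f)$ and $a=h_{\mu_0}(f)\in\sh(f)$, which settles the degenerate case.

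I expect no substantive obstacle: the proof is a direct transcription of the $\psi\ne 0$ statement to $\psi=0$. The one subtlety worth making explicit is that the entropy values produced by the Intermediate Value Theorem must be realized by ergodic measures in order to lie in $\sh(f)$; for interior $t$ this is already folded into Proposition \ref{propmain} through the face property underlying the definition of $\sv(f,\psi)$, and for the endpoints it is the ergodicity of the unique equilibrium state noted above.
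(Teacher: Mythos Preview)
Your proposal is correct and follows exactly the route the paper intends: the corollary is stated immediately after Proposition \ref{propmain} with the remark ``Analogous to Corollary \ref{entctscor}'', and the intended proof is precisely the $\psi=0$ specialization you carry out. You are in fact more careful than the paper, which gives no explicit proof at all; your handling of the equality case $h_{\mu_0}(f)=h_{\mu_T}(f)$ and your observation that the intermediate entropies are realized by \emph{ergodic} measures (via uniqueness of $\mu_t$ and the face property of $\cE(\Phi(t))$) fill in details the paper leaves implicit.
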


\begin{proof}[Proof of Theorem \ref{intpres}]
Take $\mu_n\in\cE(\psi+t\phi)$ for each $n\in\NN$.
As $\cm(f)$ is compact, there is a subsequence $\{t_k\}_{k=1}^\infty$
such that $t_k\to\infty$ and $\{\mu_{t_k}\}_{k=1}^\infty$ converges
to $\mu\in\cm(f)$.
By Proposition \ref{zerotemp}, $\mu$ is a maximizing measure for $\phi$
and hence $P_\mu(\psi)\le a$.
For any $b\in (a,P(\psi))$, as $\mu_{t_k}\to\mu$ and the entropy map
is upper semi-continuous,
there is $N$ such that
$P_{\mu_{t_N}}(\psi)<b$.
Note that $\Phi(t):=\psi+t\phi$ is a continuous path from $[0, t_N]$
to $\sv(f,\psi)$.
By Proposition \ref{propmain},
we have 
\begin{equation}\label{pinc}
\spp(\psi)\supset(b,P(\psi)].
\end{equation}
Then $\spp(\psi)\supset[a, P(\psi)]$
because $P_\mu(\psi)\le a$ and
\eqref{pinc} holds for any $b\in (a,P(\psi))$.
\end{proof}

\begin{corollary}\label{intentropy}
Let $(X,f)$ be a USC system with positive topological entropy $h(f)>0$.
Suppose that there is a continuous potential $\phi$ such that
the following holds:

\begin{enumerate}
\item 
$(X,f,t\phi)$ has a unique equilibrium state
for every $t\ge 0$.
\item 
$(X,f,\phi)$ has a unique maximizing measure $\mu$
with $h_{\mu}(f)=0$.
\end{enumerate}
Then $(X,f)$ has the intermediate entropy property.
\end{corollary}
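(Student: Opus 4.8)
The final statement to prove is the following corollary (which I shall restate for clarity):

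\begin{corollary*}
Let $(X,f)$ be a USC system with positive topological entropy $h(f)>0$. Suppose that there is a continuous potential $\phi$ such that: (1) $(X,f,t\phi)$ has a unique equilibrium state for every $t\ge 0$; (2) $(X,f,\phi)$ has a unique maximizing measure $\mu$ with $h_\mu(f)=0$. Then $(X,f)$ has the intermediate entropy property.
\end{corollary*}

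The plan is to deduce this directly from Theorem \ref{intpres} with the choice $\psi=0$. First I would observe that hypothesis (1) says precisely that $t\phi\in\su(f)$ for every $t\ge 0$, and since $\su(f)\subset\sv(f,0)=\sv(f)$, condition (1) of Theorem \ref{intpres} is satisfied with $\psi=0$: indeed $0+t\phi=t\phi\in\sv(f)$ for all $t\ge 0$. Next, for condition (2) of Theorem \ref{intpres} I would set $\al=0$. By hypothesis (2) the maximizing set $\coo(f,\phi)$ is the singleton $\{\mu\}$, and $P_\mu(0)=h_\mu(f)=0=\al$, so $P_\mu(0)\le\al$ holds for every $\mu\in\coo(f,\phi)$.

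With both hypotheses of Theorem \ref{intpres} verified, the conclusion gives $\spp(f,0)\supset[\al,P(0)]=[0,h(f)]$. Since $\spp(f,0)=\sh(f)=\{h_\nu(f):\nu\in\cm_e(f)\}$ by definition, we obtain $\sh(f)\supset[0,h(f)]\supset[0,h(f))$, which is exactly the intermediate entropy property. The positivity assumption $h(f)>0$ is what makes the interval $[0,h(f))$ nonempty and makes the statement nontrivial; it also guarantees $\al=0<P(0)=h(f)$ so that the interval in Theorem \ref{intpres} is genuine.

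There is essentially no obstacle here: the corollary is a clean specialization of Theorem \ref{intpres}, and the only thing to check carefully is that the singleton hypothesis (2) forces $P_\mu(0)=0$ on all of $\coo(f,\phi)$ — which is immediate since there is only one such measure and its entropy is assumed to be zero. (One could also note in passing that this corollary is precisely Theorem \ref{uesintent} restated, so alternatively it follows verbatim from that theorem; but routing it through Theorem \ref{intpres} keeps the logical chain uniform.) Thus the proof is just the two-line reduction above.

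\begin{proof}
Apply Theorem \ref{intpres} with $\psi=0$ and $\al=0$. Hypothesis (1) states that $t\phi\in\su(f)\subset\sv(f,0)=\sv(f)$ for every $t\ge0$, so condition (1) of Theorem \ref{intpres} holds. By hypothesis (2), $\coo(f,\phi)=\{\mu\}$ with $P_\mu(0)=h_\mu(f)=0=\al$, so condition (2) of Theorem \ref{intpres} holds as well. Therefore $\sh(f)=\spp(f,0)\supset[0,P(0)]=[0,h(f)]$, and in particular $\sh(f)\supset[0,h(f))$. Hence $(X,f)$ has the intermediate entropy property.
\end{proof}
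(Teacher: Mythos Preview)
Your proof is correct and takes essentially the same approach as the paper: both apply Theorem \ref{intpres} with $\psi=0$ (and $\al=0$), verifying that hypothesis (1) gives $t\phi\in\su(f)\subset\sv(f,0)$ and hypothesis (2) gives $P_\mu(0)=h_\mu(f)=0$ on $\coo(f,\phi)$. The paper additionally remarks that the unique equilibrium state at $t=0$ is the measure of maximal entropy, but this is not needed for the reduction since $P(0)=h(f)$ already follows from the variational principle.
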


\begin{proof}
Note that for $t=0$, $\mu_0$ is just the measure of maximal entropy. Hence
$h_{\mu_0}(f)=h(f)$.
Apply Theorem \ref{intpres} for $\psi=0$. 
\end{proof}

It is well known that every Axiom A system has unique equilibrium states
for H\"older potentials \cite{Bow75}. One can just pick a fixed point $p$ and take
$\phi(x):=-d(x,p)$ for every $x\in X$, which is a H\"older function
with the unique maximizing measure supported on $p$. In this case Corollary
\ref{intentropy} provides another proof that
every Axiom A system has the intermediate entropy property.

\section{Application to Ma\~n\'e Diffeomorphisms}
\label{secmane}
Following \cite{CTmane},
we consider the Ma\~n\'e family $\fm_{\rho,r}$, 
which is a class of DA (derived from Anosov)
maps first introduced by Ma\~n\'e \cite{Ma78}. They are
$C^0$ perturbations of a hyperbolic toral automorphism $f_A:\btt^d\to\btt^d$,
which are partially hyperbolic  with 1-dimensional centers.
Let $q$ be a fixed point of $f_A$.
As described in \cite{CTmane}, for each $g\in\fm_{\rho,r}$ we assume that:
\begin{enumerate}
\item 
$\rho>0$ such that the neighborhood $B(q,\rho)$ is  the support of the perturbation, i.e.
$g=f$ on $\btt^3\backslash B(q,\rho)$.
\item $r\in[0,1]$ such that if an orbit of $g$ spends a proportion at least $r$
of its time outside $B(q,\rho)$, then it contracts the vectors in the central direction.
\item The $C^0$ distance between $g$ and $f_A$ is sufficiently small,
i.e. there is a constant $\eta=\eta(f_A)$ depending only on $f_A$
such that $d_{C^0}(g,f_A)<\eta$.
In particular, this holds when $\rho$ is sufficiently small.
\end{enumerate}

For $\phi\in C(\bttd)$, $\rho, L>0$ and $r\in(0,1)$, denote
$$
\Xi(\rho,r,\phi, L):=(1-r)\sup_{B(q,\rho)}\phi+r(\sup_{\bttd}\phi+h(f_A)+L)+H(2r),
$$
where
$$
H(r):=-r\ln r-(1-r)\ln(1-r).$$
Denote
$$\sC_{\rho,r,g,L}:=\{\phi\in C(\bttd):P(g,\phi)>\Xi(\rho,r,\phi, L)\}$$
Let $\phi$ be an $\al$-H\"older potential on $\btt^d$ and 
$$|\phi|_\al:=\sup\left\{\frac{|\phi(x)-\phi(y)|}{d(x,y)^\al}:x,y\in X,
x\ne y\right\}$$
be its H\"older semi-norm. 
Denote
$$\sC^\al_M:=\{\phi\in C(\bttd):|\phi|_\al<M\}.$$


\begin{theorem}[{\cite[Theorem A and B]{CTmane}}]\label{ctm}
Let $g\in\fm_{\rho,r}$ 
and $\phi$ be an $\al$-H\"older continuous function
on $\btt^3$. 
\begin{enumerate}
\item \label{bddgen}
There 
is a constant $L=L(f_A)$ 
depending only on
$f_A$ such that 
$(\btt^d,g,\phi)$ has a unique equilibrium state as long as
$\phi\in\sC_{\rho,r,g,L}$.
\item \label{bddhol}
There is a function
$M(\rho,r)$ such that 
$M(\rho,r)\to\infty$ as $\rho,r\to 0$
and $(\bttd,g,\phi)$ has a unique equilibrium sate as long as
$\phi\in\sC^\al_{M(\rho,r)}$.
\end{enumerate}
\end{theorem}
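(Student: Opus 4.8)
Since this statement is quoted as Theorems~A and~B of \cite{CTmane}, the plan is to recall how it follows from the general uniqueness criterion of Climenhaga and Thompson \cite{CT}. That criterion produces a \emph{unique} equilibrium state once one exhibits, for the system $(\bttd,g)$ and the potential $\phi$, a decomposition of the space of orbit segments $(x,n)$ into consecutive prefix, core and suffix pieces drawn from collections $\cP,\cg,\cS$, together with: (i) specification at all scales for the core collection $\cg$; (ii) the Bowen property for $\phi$ on $\cg$; and (iii) a pressure gap ensuring that the obstruction carried by $\cP\cup\cS$ is strictly subdominant. Because $g$ is $C^0$-close to the Anosov automorphism $f_A$ and partially hyperbolic with one-dimensional center, the only non-hyperbolic (and non-expansive) behavior is concentrated in the center direction near $q$, so one also checks that the pressure of the obstructions to expansivity is subdominant; this places the problem squarely in the CT setting.

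First I would build the decomposition using the perturbation ball $B(q,\rho)$ as the bad set, grading an orbit segment $(x,n)$ by the proportion of iterates $0\le k<n$ with $g^k(x)\in B(q,\rho)$. By the defining property of the family $\fm_{\rho,r}$, any segment spending a proportion at least $r$ of its time outside $B(q,\rho)$ contracts central vectors and hence behaves hyperbolically; these I place in $\cg$, where the uniform hyperbolicity inherited from $f_A$ gives specification at all scales and the $\al$-Hölder regularity of $\phi$ together with bounded distortion along hyperbolic orbits yields the Bowen property with a constant $L=L(f_A)$ depending only on $f_A$. The complementary segments---those spending more than a proportion $1-r$ of their time inside $B(q,\rho)$---constitute the obstruction, and I would extract their maximal non-hyperbolic initial and terminal stretches into $\cP$ and $\cS$.

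The crux is the pressure estimate showing this obstruction is subdominant, which is exactly where $\Xi(\rho,r,\phi,L)$ appears, and this is the step I expect to be the main obstacle, since both the specification/Bowen verification on $\cg$ in this only partially hyperbolic setting and the sharp pressure bound require genuine work. I would bound the pressure $P(\cP\cup\cS,\phi)$ of the obstruction collection by splitting the count of bad segments of length $n$ into three contributions: a combinatorial factor for the choice of which of the $n$ times lie in the good (outside) positions, giving the entropy term $H(2r)$ via the Stirling estimate $\binom{n}{kn}\approx e^{nH(k)}$, the factor $2$ reflecting the technical accounting of prefix and suffix in the decomposition; the hyperbolic background entropy $h(f_A)$, earned only on the proportion $r$ of good time; and the Birkhoff sum of $\phi$, controlled by $\sup_{B(q,\rho)}\phi$ on the proportion $1-r$ of inside time and by $\sup_{\bttd}\phi$ on the rest. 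Collecting these with the distortion constant $L$ gives
\[
P(\cP\cup\cS,\phi)\le(1-r)\sup_{B(q,\rho)}\phi+r\big(\sup_{\bttd}\phi+h(f_A)+L\big)+H(2r)=\Xi(\rho,r,\phi,L).
\]
Hence whenever $\phi\in\sC_{\rho,r,g,L}$, i.e. $P(g,\phi)>\Xi(\rho,r,\phi,L)$, the obstruction is strictly subdominant, the Climenhaga--Thompson criterion applies, and $\phi$ has a unique equilibrium state; this is part \eqref{bddgen}.

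Finally I would obtain part \eqref{bddhol} by verifying the same pressure gap directly for small-seminorm potentials, rather than through the coarse bound $\Xi$. The key point is that the gap is invariant under adding constants to $\phi$, so after normalizing $\phi(q)=0$ the $\phi$-dependence of the obstruction reduces to the variation of $\phi$ over the perturbation region $B(q,\rho)$ and along the Bowen balls inside it, which by $\al$-Hölder continuity is at most of order $|\phi|_\al\,\rho^\al$; the remaining obstruction terms carry the coefficients $r$ and $H(2r)$, which vanish as $r\to0$. Since the obstruction must merely stay below the fixed entropy gap $h(f_A)>0$, one can admit $|\phi|_\al$ up to order $\rho^{-\al}$, so that the threshold $M(\rho,r)$ may be chosen with $M(\rho,r)\to\infty$ as $\rho,r\to0$; for $\phi\in\sC^\al_{M(\rho,r)}$ the gap then holds and $\phi$ has a unique equilibrium state.
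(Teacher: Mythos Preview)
The paper does not prove this theorem at all: it is stated as a quotation of Theorems~A and~B from \cite{CTmane} and used as a black box in the subsequent Lemma~\ref{alltuni}. So there is no ``paper's own proof'' to compare against; your proposal is instead a sketch of how the cited result is obtained in \cite{CTmane} via the Climenhaga--Thompson machinery of \cite{CT}, and that sketch is broadly faithful to the actual strategy there (decomposition governed by the proportion of time spent in $B(q,\rho)$, specification and Bowen property on the good core, and the pressure-gap estimate yielding $\Xi(\rho,r,\phi,L)$).

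Since the present paper treats the statement as an imported result, the appropriate ``proof'' here is simply a citation, which is what the paper does. If you wish to include your sketch, it should be framed as an informal outline of the argument in \cite{CTmane}, not as the paper's proof; and note that some of your attributions (e.g.\ the precise origin of the constant $L$ and the exact combinatorial source of the $H(2r)$ term) are heuristic and would need to be checked against the details in \cite{CTmane} rather than asserted.
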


Let $p$ be another fixed point of $f_A$ such that 
$p\notin\overline{B(q,\rho)}$. 
We can fix
$\Psi\in C(\bttd)$ such that
\begin{enumerate}
\item $\Psi(p)=0$.
\item $\Psi(x)=-1
$ for every $x\in B(q,\rho)$.
\item $\Psi(x)<0$ for every 
$x\in\bttd\backslash
\{p\}$.
\item $\Psi$ is $\al$-H\"older.
\end{enumerate}
Denote by $\mu_p$ the Dirac measure on $p$.
Then $\mu_p$ is the unique maximizing measure for $\Psi$ and
$h_{\mu_p}(g)=0$.

\begin{lemma}\label{alltuni}
There are $\rho^*>0$ and $r^*\in(0,1)$ such that
for every $g\in\fm_{\rho^*,r^*}$, we have
$t\Psi\in\su(g)$ for all $t\ge 0$.
\end{lemma}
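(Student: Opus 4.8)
The plan is to apply Theorem~\ref{ctm} to the potentials $t\Psi$, invoking its two criteria on two (overlapping) ranges of $t$: the bounded-H\"older criterion~(\ref{bddhol}) for $t$ in a bounded interval, and the pressure-gap criterion~(\ref{bddgen}) for large $t$. One cannot use~(\ref{bddhol}) alone for all $t\ge0$ because $|t\Psi|_\al=t|\Psi|_\al\to\infty$; the point is that the special shape of $\Psi$ — identically $-1$ on $B(q,\rho)$, with maximum value $0$ attained only at $p$ — drives the threshold $\Xi$ in~(\ref{bddgen}) to $-\infty$ as $t\to\infty$, while $P(g,t\Psi)$ stays bounded below, so~(\ref{bddgen}) takes over for large $t$.

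First I would record a pressure lower bound valid for the whole family. Since $p$ is a fixed point of $f_A$ outside $\overline{B(q,\rho)}$ and every $g\in\fm_{\rho,r}$ agrees with $f_A$ there, $p$ is fixed by $g$ as well, so the Dirac measure $\mu_p$ is $g$-invariant with $h_{\mu_p}(g)=0$ and $\int\Psi\,d\mu_p=\Psi(p)=0$; hence $P(g,t\Psi)\ge P_{\mu_p}(t\Psi)=0$ for all $t\ge0$. Next I would evaluate $\Xi$ on $t\Psi$ for $t\ge0$: since $\sup_{B(q,\rho)}t\Psi=-t$ and $\sup_{\bttd}t\Psi=0$, with $L=L(f_A)$ the constant from Theorem~\ref{ctm}(\ref{bddgen}) one has
\[
\Xi(\rho,r,t\Psi,L)=-(1-r)t+r\bigl(h(f_A)+L\bigr)+H(2r)=-(1-r)\bigl(t-t_0(r)\bigr),\qquad t_0(r):=\frac{r\bigl(h(f_A)+L\bigr)+H(2r)}{1-r}.
\]
Therefore $\Xi(\rho,r,t\Psi,L)<0\le P(g,t\Psi)$ as soon as $t>t_0(r)$, and Theorem~\ref{ctm}(\ref{bddgen}) yields $t\Psi\in\su(g)$ for \emph{every} $g\in\fm_{\rho,r}$ once $t>t_0(r)$ — the uniformity in $g$ here relies on $L$ depending only on $f_A$.

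Finally I would reconcile the two ranges. As $r\to0^+$, $H(2r)\to0$, so $t_0(r)\to0$; on the other hand $M(\rho,r)\to\infty$ as $(\rho,r)\to(0,0)$, and $|\Psi|_\al>0$ because $\Psi$ is non-constant. Hence I can fix $\rho^*>0$ and $r^*\in(0,\tfrac12)$ small enough that Theorem~\ref{ctm} applies to all $g\in\fm_{\rho^*,r^*}$ and that $t_0(r^*)\,|\Psi|_\al<M(\rho^*,r^*)$. Then for every $g\in\fm_{\rho^*,r^*}$ and every $t\ge0$: if $t|\Psi|_\al<M(\rho^*,r^*)$, then $t\Psi\in\sC^\al_{M(\rho^*,r^*)}$ and Theorem~\ref{ctm}(\ref{bddhol}) gives $t\Psi\in\su(g)$; otherwise $t\ge M(\rho^*,r^*)/|\Psi|_\al>t_0(r^*)$ and the previous paragraph gives $t\Psi\in\su(g)$. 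Since these cases exhaust $[0,\infty)$, the lemma follows.

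I do not anticipate a real obstacle; the only thing to watch is \emph{uniformity} over the family $\fm_{\rho^*,r^*}$, which is what forces the use of the $g$-independent quantities $L=L(f_A)$, $h(f_A)$, $M(\rho,r)$ and the suprema of $t\Psi$, together with the compatibility of the two thresholds, which holds because $t_0(r)\to0$ while $M(\rho,r)\to\infty$ as the perturbation parameters shrink.
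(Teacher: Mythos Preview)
Your proposal is correct and follows essentially the same route as the paper: your threshold $t_0(r)$ is exactly the paper's $\beta_r$, and both arguments split $[0,\infty)$ into a bounded range handled by Theorem~\ref{ctm}(\ref{bddhol}) and an unbounded range handled by Theorem~\ref{ctm}(\ref{bddgen}) via the lower bound $P(g,t\Psi)\ge P_{\mu_p}(t\Psi)=0$, with the compatibility condition $t_0(r^*)\,|\Psi|_\al<M(\rho^*,r^*)$ ensuring the two ranges cover everything. The only cosmetic difference is where you place the cut (at $M(\rho^*,r^*)/|\Psi|_\al$ rather than at $\beta_{r^*}$), which is immaterial since the ranges overlap.
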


\begin{proof}
Denote $$\beta_r:=\frac{r(h(f_A)+L)+H(2r)}{1-r}.$$
Note that $\beta_r\to 0$ as $r\to0$.
By Theorem \ref{ctm}\eqref{bddhol},
there are
$\rho^*>0$ and $r^*\in(0,1)$ such that
$$
\beta_{r^*}|\Psi|_\al<M(\rho^*,r^*)
$$
This implies that 
for every $g\in\fm_{\rho^*,r^*}$, we have
$$t\Psi\in\sC^\al_{M(\rho^*,r^*)}\subset\su(g)
\text{ for every }t\in[0,\beta_{r^*}].$$

For $t>\beta_{r^*}$, 
as $h_{\mu_p}(g)=0$,
by the Variational Principle,
we have
\begin{align*}
\Xi(\rho^*,r^*,t\Psi, L)&=(1-r)(-t)+r(h(f_A)+L)+H(2r)
\\&<0
\\&=h_{\mu_p}(g)+\int (t\Psi)d\mu_p
\\&\le P(g,t\Psi).
\end{align*}
By Theorem \ref{ctm}\eqref{bddgen},
for every $g\in\fm_{\rho^*,r^*}$, we have
$$t\Psi\in\sC_{\rho^*,r^*,g,L}\subset\su(g)\text{ for every }t>\beta_{r^*}.$$
\end{proof}

\begin{remark}
We may choose $\Psi$ such that $|\Psi|_\al$ is as small as possible
to achieve larger values of $\rho^*$ and $r^*$ as in Lemma \ref{alltuni}.
\end{remark}


\begin{theorem}
For every $g\in\fm_{\rho^*,r^*}$, the system $(\bttd,g)$
has the intermediate entropy property.
\end{theorem}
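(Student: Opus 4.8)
The plan is to verify that the system $(\bttd,g)$ for $g\in\fm_{\rho^*,r^*}$ satisfies the hypotheses of Corollary \ref{intentropy}, with the potential $\phi$ taken to be the $\al$-H\"older function $\Psi$ fixed before Lemma \ref{alltuni}. First I would recall that $(\bttd,g)$ is a USC system: $g$ is a $C^0$ perturbation of the Anosov automorphism $f_A$, and such DA maps in the Ma\~n\'e family are entropy expansive (or one may invoke that $g$ can be taken smooth, hence asymptotically entropy expansive, so the entropy map is upper semi-continuous). Moreover $h(g)>0$, since $h(g)\ge h(f_A)>0$ by the fact that the topological entropy of such $C^0$-small DA perturbations equals that of $f_A$ (indeed these maps are semi-conjugate to $f_A$).

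Next I would check the two conditions of Corollary \ref{intentropy} for $\phi=\Psi$. Condition (1), that $(\bttd,g,t\Psi)$ has a unique equilibrium state for every $t\ge 0$, is exactly the content of Lemma \ref{alltuni}, since $t\Psi\in\su(g)$ for all $t\ge0$ and all $g\in\fm_{\rho^*,r^*}$. Condition (2), that $(\bttd,g,\Psi)$ has a unique maximizing measure $\mu$ with $h_\mu(g)=0$, is supplied by the remark immediately after the construction of $\Psi$: since $\Psi(p)=0$, $\Psi(x)<0$ for all $x\ne p$, and $p$ is a fixed point of $g$ (it lies outside $\overline{B(q,\rho)}$, where $g=f_A$), the Dirac measure $\mu_p$ is the unique invariant measure maximizing $\int\Psi\,d\nu$, and $h_{\mu_p}(g)=0$. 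With both hypotheses in hand, Corollary \ref{intentropy} yields that $(\bttd,g)$ has the intermediate entropy property, completing the proof.

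The only points that require care, rather than being immediate citations, are the two structural facts about $g$ itself: that the entropy map is upper semi-continuous, and that $h(g)>0$. Both follow from standard properties of DA maps that are $C^0$-close to an Anosov automorphism, so I would state them with a brief justification and a pointer to \cite{CTmane, Ma78} (or to the entropy-expansiveness of partially hyperbolic diffeomorphisms with one-dimensional center). I do not anticipate a genuine obstacle here; the real work was already done in Lemma \ref{alltuni} and in Theorem \ref{intpres}/Corollary \ref{intentropy}, and this final theorem is essentially the act of assembling those pieces. If one wishes to phrase things via Theorem \ref{uesintent} instead, the argument is identical: conditions (1) and (2) there are again Lemma \ref{alltuni} and the properties of $\mu_p$, respectively.

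\begin{proof}
Since $g$ is a $C^0$-small perturbation of the Anosov automorphism $f_A$ supported on $B(q,\rho^*)$, it is a partially hyperbolic diffeomorphism with one-dimensional center, hence entropy expansive; in particular $(\bttd,g)$ is a USC system. Moreover $g$ is semi-conjugate to $f_A$, so $h(g)\ge h(f_A)>0$. Let $\Psi$ and $p$ be as fixed above. Since $p$ lies outside $\overline{B(q,\rho^*)}$, we have $g(p)=f_A(p)=p$, so $\mu_p$ is $g$-invariant; as $\Psi(p)=0$ and $\Psi(x)<0$ for all $x\neq p$, $\mu_p$ is the unique maximizing measure for $\Psi$, and clearly $h_{\mu_p}(g)=0$. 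By Lemma \ref{alltuni}, $(\bttd,g,t\Psi)$ has a unique equilibrium state for every $t\ge 0$. Thus the hypotheses of Corollary \ref{intentropy} are satisfied with $\phi=\Psi$, and we conclude that $(\bttd,g)$ has the intermediate entropy property.
\end{proof}
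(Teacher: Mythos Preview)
Your proof is correct and follows essentially the same route as the paper: verify that $(\bttd,g)$ is USC via entropy expansiveness, note that $\mu_p$ is the unique maximizing measure for $\Psi$ with zero entropy, invoke Lemma \ref{alltuni} for uniqueness of equilibrium states along the ray $t\Psi$, and conclude by Corollary \ref{intentropy}. The paper's version is terser (it cites \cite[Proposition 6]{CL} for entropy expansiveness and omits the explicit check that $h(g)>0$), but the argument is the same.
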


\begin{proof}
By \cite[Proposition 6]{CL}, every Ma\~n\'e diffeomorphism is
entropy expansive. So $(\bttd,g)$ is a USC system. 
As $\mu_p$ is the unique maximizing measure for $\Psi$ and
$
h_{\mu_p}(g)=0$,
the 
conclusion
follows from Lemma \ref{alltuni} and 
Corollary \ref{intentropy}.
        
\end{proof}

\section*{Acknowledgments}
The author is supported by 
National Natural Science Foundation of China
(No. 11571387) and 
CUFE Young Elite Teacher Project (No. QYP1902).
The author learned about ergodic optimization
from Yiwei Zhang and Yun Yang, and
would like to thank them
for fruitful discussions.





\begin{thebibliography}{99}







\bibitem{Bow75} R. Bowen, 
\emph{Some systems with unique equilibrium states}, 
Math. Systems Theory
8 (1974/75), no. 3, 193--202. 

\bibitem{Bow08} B. Bowen
Equilibrium states and the ergodic theory of Anosov diffeomorphisms,
revised ed., Lecture Notes in Mathematics, vol. 470, Springer-Verlag, Berlin,
2008, 

With a preface by David Ruelle, Edited by Jean-Ren\'e Chazottes.





\bibitem{CT} V. Climenhaga and D. J. Thompson, 
\emph{Unique equilibrium states for flows and homeomorphisms with non-uniform
structure}. Adv. Math. 303 (2016), 744--799.


\bibitem{CTmane} V. Climenhaga, T. Fisher and D. J. Thompson, 
\emph{Equilibrium states for Ma\~n\'e diffeomorphisms}. 
Ergodic Theory and Dynamical Systems, 2018, 1--23. doi:10.1017/etds.2017.125

\bibitem{CL} W. Cowieson and L.-S. Young,
\emph{SRB measures as zero-noise limits}. 
Ergod. Th. \& Dynam. Sys. 25(4) (2005),
1115--1138.

\bibitem{GSW}
L. Guan, P. Sun and W. Wu, 
\emph{Measures of Intermediate Entropies and Homogeneous Dynamics}, 
Nonlinearity, {\bf 30} (2017), 3349--3361.


\bibitem{DKU}
M. Denker, G. Keller, M. Urba\'nski, 
\emph{On the uniqueness of equilibrium states for piecewise monotone mappings}, 
Studia Math. 1990, 97(1), 27--36.




\bibitem{IT}
G. Iommi, M. Todd, 
\emph{Natural equilibrium states for multimodal maps}, 
Comm. Math. Phys. 2010, 300(1), 65--94.

\bibitem{Jenk}
O. Jenkinson, 
\emph{Ergodic optimization in dynamical systems},
Ergod. Th. \& Dynam. Sys. 2019, 39, 2593--2618.

\bibitem{KM} A. Katok and L. Mendoza,
Dynamical systems with nonuniformly hyperbolic behavior, supplement to
\emph{Introduction to the Modern Theory of Dynamical Systems}, by A. Katok
and B. Hasselblatt, Cambridge University Press, 1995.

\bibitem{Kell}
G. Keller, 
Equilibrium States in Ergodic Theory,
Cambridge University Press, 1998.

\bibitem{KKK}
J. Konieczny, M. Kupsa, D. Kwietniak,
\emph{Arcwise connectedness of the set of ergodic measures of hereditary shifts}. 
Proceedings of the American Mathematical Society, 2018, 146(8), 3425-3438.

\bibitem{LO}
J. Li and P. Oprocha,
\emph{Properties of invariant measures in dynamical
systems with the shadowing property}.
Ergod. Th. \& Dynam. Sys. 2018, 38, 2257--2294.


\bibitem{Ma78}
\newblock R. Ma\~n\'e,
\newblock \emph{Contributions to the stability conjecture},
\newblock Topology, {\bfseries 17} (1978), 383--396.





\bibitem{QS}
A. Quas, and T. Soo, \emph{Ergodic universality of some topological dynamical systems},
 Transactions of the American Mathematical Society, 2016, 368(6), 4137--4170.

\bibitem{Ruelle}
D. Ruelle, 
Thermodynamic formalism.
Encyclopedia of Mathematics and its Applications,
5. Addison-Wesley Publishing Co., Reading, Mass., 1978.


\bibitem{Sun09}
P. Sun, \emph{Zero-entropy invariant measures for skew product diffeomorphisms},
Ergodic Theory and Dynamical Systems, {\bf 30} (2010), 923--930.

\bibitem{Sun10}
 P. Sun,
\emph{Measures of intermediate entropies for skew product diffeomorphisms},
 Discrete Contin. Dyn. Syst - A, 2010, 27(3), 1219--1231.

\bibitem{Sun12}
P. Sun,
\emph{Density of metric entropies for linear toral automorphisms}, Dynamical
Systems, 2012, 27(2), 197--204.

\bibitem{Sunze}
P. Sun,
\emph{Zero-entropy dynamical systems with gluing orbit property}.
preprint, 2019, arXiv:1810.08980.

\bibitem{Sunintent}
P. Sun,
\emph{Ergodic measures of intermediate entropies for dynamical systems with approximate
      product property}.
preprint, 2019, arXiv:1906.09862.

\bibitem{Sunct}
P. Sun,
\emph{Denseness of intermediate pressures for systems with the 
      Climenhaga-Thompson structures}.
Journal of Mathematical Analysis and Applications, 487(2), 2020, 
doi:10.1016/j.jmaa.2020.124027.







\bibitem{Ures}
R. Ures, 
\emph{Intrinsic ergodicity of partially hyperbolic diffeomorphisms with a hyperbolic linear part}. 
Proceedings of the American Mathematical Society, 2012, 140(6), 1973-1985.

\bibitem{Walters}
\newblock P. Walters,
An Introduction to Ergodic Theory,
\newblock Springer-Verlag, 1982.

\end{thebibliography}
\end{document}